\def\underset#1#2{{\mathrel{\mathop {{}_{} {#2}}\limits_{{#1}_{}}}}}
\def\upplim_#1{\underset{#1}{\overline\lim}\;}
\def\lowlim_#1{\underset{#1}{\underline\lim}\;}
\newtheorem{The}{Theorem}
\newtheorem{Lem}[The]{Lemma}
\newtheorem{Cor}[The]{Corollary}
\newtheorem{Def}[The]{Definition}
\newcommand{\C}{{\mathbf{C}}}
\renewcommand{\P}{{\mathbf{P}}}
\newcommand{\supp}{\mathrm{Supp}\,}
\numberwithin{equation}{section}
\begin{document} 
	\title[On holomorphic curves into complex projective varieties]{On holomorphic curves into complex projective varieties} 
	
	\author{Giang Le}
	
	\setlength{\baselineskip}{16pt}
	\maketitle
	
	\begin{center}
		{\it Department of Mathematics, Hanoi National University of Education,}
		
		{\it 136-Xuan Thuy, Cau Giay, Hanoi, Vietnam.}
		
		\textit{E-mail: legiang@hnue.edu.vn, legiang01@yahoo.com}
	\end{center}
	
	\begin{abstract} {In this paper, we study holomorphic curves satisfying the Fubini-Study derivative $\|f'(z)\|=O(|z|^\sigma)$ for some $\sigma>-1$ from the viewpoint of Nevanlinna theory.}
	\end{abstract}
	% More precisely, we prove that such the curves omitting a family of hypersurfaces intersecting of finite points have growth order at most $\sigma+1$. We also study second main theorem for such the holomorphic curves intersecting arbitrary hypersurfaces in projective varieties over an angular domain and show the existence of Julia directions.
	\def\thefootnote{\empty}
	\footnotetext{
		2010 Mathematics Subject Classification:
		 32H30 (14J70 32H04 32H25).\\
		\hskip8pt Key words and phrases: Second main theorem over an angular domain, Julia directions, Brody curves.}
%	{\today}
	\setlength{\baselineskip}{16pt}
	\maketitle
	\section{Introduction}
	In this paper, we study holomorphic curves $f: \C\rightarrow\P^N(\C)$ satisfying the Fubini-Study derivative $\|f'(z)\|=O(|z|^\sigma)$ for some $\sigma >-1.$ The explicit expression is: 
	$$\| f'\|^2=
	\dfrac{\sum_{i\neq j}|f_i^\prime f_j-f_if_j^\prime|^2}{\|  f\|^4},$$
	where ${\bf f}=(f_0,\ldots,f_N)$ is a homogeneous representation of $f$ (that is, the $f_j$ are entire functions which
	never simultaneously
	vanish) and
		$$\|  f\|^2=\sum_{j=0}^N|f_j|^2.$$

When $\sigma=0$, $f$ is called a Brody curve. The interest of these curves comes from Brody's lemma.  It is stated that for every non constant holomorphic curve $f$, there exists a sequence of affine map $a_k:\C\rightarrow\C$ such that the limit $\int f\circ a_k$ exists and is a non constant Brody curves. Much attention has been given to study Brody curves from the viewpoint of Nevanlinna theory \cite{BE, DD, T0, T1}.  In \cite{BE, DD, TM}, some versions of Second Main Theorem have been established for  Brody curves intersecting hypersurfaces located in general position. The first aim of this paper is to generalize such the results to the case of holomorphic curves satisfying  the Fubini-Study derivative $\|f'(z)\|=O(|z|^\sigma)$ for some $\sigma >-1$ and arbitrary hypersurfaces.

	%Let $f:\C\to\P^N$ be a holomorphic curves. We choose a homogeneous representation ${\bf f}=(f_0,\ldots,f_N)$ of our curve (that is the $f_j$ are entire functions which
	%never simultaneously
%	vanish).  Put
	%$$\| f\|^2=\sum_{j=0}^N|f_j|^2.$$
%	The spherical derivative $\| f'\|$ measures the
%	length distortion from the
%	Euclidean metric in $\C$ to the
%	Fubini-Study metric in $\P^N$. The explicit expression
%	is 
%	$$\| f'\|^2=
%	\| f\|^{-4}\sum_{i\neq j}|f_i^\prime f_j-f_if_j^\prime|^2.$$
%	For each $\sigma>-1$. Put 
%	$$\mathcal{M}_\sigma=\{f: \C\rightarrow \P^N\, \text{is a holomorphic curve satisfying}\, \|f'\|(z)= O(|z|^\sigma)\}.$$
	
%	The most important case of $\sigma=0$ is called the set of Brody curves.

%For simplicity, we denote $T_f(r)$ instead of $T_f(r, \C)$.
	We recall that the Nevanlinna characteristic is
defined as follows
$$T_f(r)=\int_0^r \frac{dt}{t}\left(\frac{1}{\pi}\int_{|z|\leq t}\| f'\|^2(z)dm_z\right),$$
where $dm$ is the area element in $\C$.
% It is well known that
%\begin{align*}
%	T_f(r)=\dfrac{1}{2\pi}\int_0^{2\pi}u(re^{i\theta})d\theta-u(0).
%\end{align*}
%In this paper, we are interested in holomorphic curves $f: \C\rightarrow \P^N(\C)$ satisfying  $\|f'(z)\|=O(|z|^\sigma) $ for some $\sigma>-1.$ When $\sigma =0,$ they are knowns as Brody curves.
%=\int_0^r \frac{dt}{t}\left(\frac{1}{\pi}\int_{\Omega\cap t\mathbb{D}}\| f'\|^2(z)dm_z\right).
	%\dfrac{1}{2\pi}\int_1^r\dfrac{\triangle u(t\mathbb{D})}{t}dt+O(1).

	Obviously, the condition $\|f'(z)\|=O(|z|^\sigma)$ for some $\sigma>-1$ implies that $$T_f(r)=O(r^{2\sigma+2}).$$
	Clunie and Hayman \cite{CH} found that if the spherical derivative $\|f'(z)\|$ of an entire function satisfies $\|f'(z)\|=O(|z|^\sigma)$, then $T_f(r)=O(r^{\sigma+1})$. Barret and Eremenko \cite{BE} generalized this to the case of holomorphic curves in projective space of dimension $n$ omitting $n$ hyperplanes in general position. The first main result of this paper is to show that this phenomenon persists in complex projective variety omitting hypersurfaces satisfying the intersection of them consisting of finite points.
	
	In this paper, we always assume that $V$ be a complex projective algebraic variety in $\P^N(\C)$.
	\begin{The}\label{t1}
	 Let $D_1,\ldots, D_q$ be $q$ hypersurfaces  such that $\dim(\bigcap_{1\leq i\leq q}\supp D_i\bigcap V)=0$. Let $f: \C\rightarrow V$ satisfying $\|f'(z)\|=O(|z|^\sigma)$ for some $\sigma>-1.$ Assume that $(\bigcup_{1\leq i\leq q}\supp D_i)\bigcap f(\C)=\emptyset.$ Then,
		$$T_f(r)=O(r^{\sigma+1}).$$
		\end{The}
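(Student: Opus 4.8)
The plan is to reduce the statement to the already understood hyperplane case and then to extract the crucial square-root improvement by a potential-theoretic argument on a maximum of harmonic functions.

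\emph{Step 1 (Veronese reduction to hyperplanes).} Put $d=\mathrm{lcm}(d_1,\dots,d_q)$, where $d_i=\deg D_i$, and let $Q_i$ be a defining form of $D_i$. Under the $d$-th Veronese embedding $v_d\colon\P^N(\C)\hookrightarrow\P^M(\C)$ each form $Q_i^{d/d_i}$ becomes the restriction of a linear form $\ell_i$, so that $\ell_i(\mathbf F)=Q_i(\mathbf f)^{d/d_i}$, where $\mathbf F=v_d(\mathbf f)$ is a homogeneous representation of $g:=v_d\circ f\colon\C\to W:=v_d(V)$. Since $v_d$ is a morphism of the compact variety $V$, one has $\|g'(z)\|\le C\|f'(z)\|=O(|z|^\sigma)$ and $T_g(r)=d\,T_f(r)+O(1)$; moreover $\dim\big(\bigcap_i\{\ell_i=0\}\cap W\big)=0$ and $g$ omits every hyperplane $H_i=\{\ell_i=0\}$. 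Thus it suffices to prove $T_g(r)=O(r^{\sigma+1})$. The whole point of this step is that the Veronese map has no base points, so the growth hypothesis on $f'$ passes to $g'$ without loss.

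\emph{Step 2 (reduction to a maximum of harmonic functions).} From $\|g'\|^2=\tfrac12\Delta\log\|\mathbf F\|^2$ and Jensen's formula one gets $T_g(r)=\tfrac1\pi\int_0^{2\pi}\log\|\mathbf F(re^{i\theta})\|\,d\theta+O(1)$. Because $g$ omits each $H_i$, the entire functions $\ell_i(\mathbf F)$ have no zeros; writing $\ell_i(\mathbf F)=e^{h_i}$ with $h_i$ entire, the functions $v_i:=\re h_i=\log|\ell_i(\mathbf F)|$ are harmonic and $M:=\max_i v_i$ is subharmonic. The hypothesis $\dim(\bigcap_iH_i\cap W)=0$ means the $\ell_i$ have only finitely many common zeros on $W$, say $Z$; away from $Z$ one has $\|\mathbf F\|\le C\max_i|\ell_i(\mathbf F)|$, so $\log\|\mathbf F\|=M+O(1)$ there, and overall $T_g(r)=\tfrac1\pi\int_0^{2\pi}M(re^{i\theta})\,d\theta+m_g(r,Z)+O(1)$, where $m_g(r,Z)$ is the proximity of $g$ to the finite set $Z$.

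\emph{Step 3 (the square-root gain).} By Jensen again, $\tfrac{1}{2\pi}\int_0^{2\pi}M(re^{i\theta})\,d\theta=M(0)+\int_0^r\nu(t)\,\tfrac{dt}{t}$, where $\nu(t)=\tfrac1{2\pi}\int_{|z|\le t}\Delta M\,dm_z$ is the Riesz mass of $M$. Since $M$ is a maximum of harmonic functions, $\Delta M$ is carried by the balance locus $\{v_i=v_j=M\}$ with linear density $|h_i'-h_j'|$. On this locus $|\ell_i(\mathbf F)|$ and $|\ell_j(\mathbf F)|$ are the two largest coordinates, and the distance-decreasing property of the projection $[\ell_1(\mathbf F):\dots:\ell_q(\mathbf F)]$ off $Z$ gives $\|g'\|\ge c\,|h_i'-h_j'|$ there; hence the density is $O(|z|^\sigma)$. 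Controlling the length of the balance locus inside $\{|z|\le t\}$ by $O(t)$, as in \cite{BE}, then yields $\nu(t)=O(t^{\sigma+1})$, whence $\tfrac1\pi\int_0^{2\pi}M(re^{i\theta})\,d\theta=O(r^{\sigma+1})$.

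\emph{Main obstacle.} The crux is Step 3: the naive bound $\int_{|z|\le t}\|g'\|^2\,dm_z=O(t^{2\sigma+2})$ only reproduces the trivial estimate $T_g(r)=O(r^{2\sigma+2})$, and gaining the square root to reach $r^{\sigma+1}$ forces the use of the non-smooth structure of $M=\max_i v_i$, with the derivative hypothesis entering \emph{solely} through the differences $h_i'-h_j'$; this is exactly the mechanism of Clunie--Hayman and Barrett--Eremenko. The second, variety-specific difficulty is the term $m_g(r,Z)$: the $0$-dimensional hypothesis confines the failure of $\log\|\mathbf F\|\approx M$ to neighbourhoods of the finitely many points of $Z$, and one must show, again using $\|g'\|=O(|z|^\sigma)$ to prevent $g$ from concentrating its approach to $Z$ (precisely as \cite{BE} handle the single common point of $n$ hyperplanes in general position), that $m_g(r,Z)=O(r^{\sigma+1})$ as well, which closes the estimate.
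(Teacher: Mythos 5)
Your strategy differs from the paper's precisely at the point where your gaps appear. The paper never re-derives the Clunie--Hayman/Barrett--Eremenko mechanism: it chooses one auxiliary homogeneous form $Q_0$ with $\bigcap_{0\le i\le q}\{Q_i=0\}\cap V=\emptyset$ (possible because the common zero set of $Q_1,\dots,Q_q$ on $V$ is finite), so that $Q=(Q_0,Q_1,\dots,Q_q)\colon V\to\P^q(\C)$ is a base-point-free morphism of compact varieties. Then $Q\circ f$ satisfies $\|(Q\circ f)'\|=O(|z|^\sigma)$ and omits the $q$ coordinate hyperplanes $\{w_1=0\},\dots,\{w_q=0\}$ of $\P^q(\C)$, which are in general position there; the main theorem of Barrett--Eremenko applies as a black box, and $T_{Q\circ f}(r)=dT_f(r)+O(1)$ finishes the proof. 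Your Veronese reduction instead lands in $\P^M(\C)$ with $q$ hyperplanes that are not in general position in $\P^M(\C)$ (only their trace on $W$ is zero-dimensional), so the cited theorem cannot be invoked and you are forced to reprove it from scratch. The auxiliary coordinate $Q_0$ is the one idea your write-up is missing; with it, your Steps 2 and 3 become unnecessary.

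In the re-proof you sketch, two steps are genuinely problematic. First, in Step 3 you bound the Riesz mass of $M=\max_i v_i$ by a density of order $O(|z|^\sigma)$ times the length of the balance locus in $\{|z|\le t\}$, asserting that this length is $O(t)$ ``as in [BE]''. The balance locus is a level set of the harmonic function $\re(h_i-h_j)$, and such level sets need not have length $O(t)$: for $h_i-h_j=e^z$ the zero set of $\re e^z$ consists of the lines $\{y=\pi/2+k\pi\}$ and has length of order $t^2$ in $\{|z|\le t\}$. No argument is given that the additional constraint on $|h_i'-h_j'|$ repairs this, and this is not in fact how the square-root gain is obtained in \cite{BE}; as stated, $\nu(t)=O(t^{\sigma+1})$ does not follow. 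Second, the proximity term $m_g(r,Z)$ attached to the finitely many common zeros $Z$ of the $\ell_i$ on $W$ is exactly the variety-specific content of the theorem: near $Z$ the inequality $\|\mathbf{F}\|\le C\max_i|\ell_i(\mathbf{F})|$ fails and the comparison $\log\|\mathbf{F}\|=M+O(1)$ breaks down, yet your treatment of it is the sentence ``one must show \dots as well''. Both difficulties evaporate under the paper's reduction, since the extra coordinate $Q_0$ removes the base locus before Nevanlinna theory ever enters.
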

	Combined with a result of Tsukamoto \cite[Theorem 1.9]{T0}, Theorem \ref{t1} implies that
	\begin{Cor}
	Mean dimension in the sense of Gromov \cite{Gromov} of the space of Brody curves in $$V\backslash \text{ $\dim V$ hypersurfaces in general position}$$ is zero.
	\end{Cor}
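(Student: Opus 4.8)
The target of the corollary is the open variety $V^\circ := V \setminus \bigcup_{i=1}^{n}\supp D_i$, where $n=\dim V$ and $D_1,\dots,D_n$ are hypersurfaces in general position. The plan is to show that \emph{every} Brody curve $f:\C\to V^\circ$ obeys the linear growth estimate $T_f(r)=O(r)$, which is exactly the conclusion of Theorem \ref{t1} in the boundary case $\sigma=0$, and then to feed this estimate into Tsukamoto's criterion \cite[Theorem 1.9]{T0}, which bounds the Gromov mean dimension of the translation dynamical system on the space of Brody curves in terms of the quadratic growth rate of $T_f$. A Brody curve being, by definition, a holomorphic curve with bounded Fubini--Study derivative, the identification $\|f'(z)\|=O(1)=O(|z|^0)$ puts us precisely at $\sigma=0>-1$.

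First I would check the hypotheses of Theorem \ref{t1} with $q=n=\dim V$. By the very meaning of ``general position'', the common intersection $\bigcap_{i=1}^{n}\supp D_i\cap V$ has codimension $n$ in $V$, hence dimension $\dim V - n = 0$; this is exactly the dimension hypothesis of Theorem \ref{t1}. Moreover a holomorphic curve with image in $V^\circ$ omits all the $D_i$, i.e. $(\bigcup_{i=1}^{n}\supp D_i)\cap f(\C)=\emptyset$. Thus Theorem \ref{t1} applies and yields $T_f(r)=O(r^{\sigma+1})=O(r)$ for each such $f$. In particular the quadratic energy density $\limsup_{r\to\infty} T_f(r)/r^2$ vanishes for every Brody curve into $V^\circ$, since $T_f(r)=O(r)$ forces $T_f(r)/r^2\to 0$. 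Tsukamoto's Theorem 1.9 states that when this density is zero the mean dimension of the space of Brody curves vanishes, and combining the two implications gives the corollary.

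The main obstacle is to match the hypotheses of the two theorems cleanly rather than merely pointwise. Mean dimension is an invariant of the \emph{whole} space of Brody curves equipped with its $\C$-translation action, so the bound $T_f(r)=O(r)$ should hold with an implied constant that is uniform over the family (or at least over each translation orbit), not just finite for each individual $f$. I would verify that the constant produced by the proof of Theorem \ref{t1} depends only on $\sigma$, on the fixed geometric data $V,D_1,\dots,D_n$, and on the normalizing bound $\sup_z\|f'(z)\|\le 1$ shared by all Brody curves, so that uniformity is automatic; this is the kind of constant-tracking that the formulation of \cite[Theorem 1.9]{T0} is designed to consume. A secondary point to confirm is that the relevant object is the compact, translation-invariant space of Brody curves into $V$ whose images avoid $\bigcup\supp D_i$ (equivalently, Brody curves into the open $V^\circ$), so that Gromov's mean dimension is genuinely defined and Tsukamoto's hypotheses truly apply.
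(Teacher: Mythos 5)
Your proposal coincides with the paper's own (essentially one-line) derivation: apply Theorem \ref{t1} with $\sigma=0$ to every Brody curve omitting the $n=\dim V$ hypersurfaces (general position with $\sharp I=n$ gives $\dim\bigcap_{i}\supp D_i\cap V\leq 0$, so the hypothesis of Theorem \ref{t1} holds), obtaining $T_f(r)=O(r)=o(r^{2})$, and then invoke Tsukamoto's criterion \cite[Theorem 1.9]{T0} to conclude the mean dimension vanishes. Your additional care about uniformity of the implied constant and about the translation-invariant space on which mean dimension is defined goes beyond what the paper records, but it does not change the route, which is the same.
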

In \cite{DD}, Da Costa and Duval proved a second main theorem for Brody curves in projective space intersecting hyperplanes. Later, Thai and Mai \cite{TM} established a second main theorem over an angular domain for Brody curves into a complex projective algebraic variety  intersecting hypersurfaces in general position. Our second purpose is to generalize Thai-Mai's results \cite{TM} to the case of arbitrary hypersurfaces and holomorphic curves satisfying $\|f'(z)\|=O(|z|^\sigma)$ for some $\sigma >-1.$ To state our result, we recall some notations.

\begin{Def} Let $D_1,\ldots, D_q$ be hypersurfaces in $\P^N(\C)$.  

a)	They are said to be in $m-$subgeneral position in $V$ ($\dim V\leq m\in\mathbb{N}$) if for any subset $I \subset\{1,\ldots,q\}$ with $\sharp I\leq m+1$, we have $$ \dim\bigcap_{i\in I}\supp D_{i}\bigcap V\leq m-\sharp I.$$ 
	
b)	They  are said to be in general position in $V$ if they are in $\dim V$-subgeneral position.
\end{Def}
Let  $f: \C\rightarrow\P^N(\C)$ be a holomorphic curve. We choose a homogeneous representation ${\bf f}=(f_0,\ldots, f_N)$ of our curve. Set
$$u(z)=\log\sqrt{|f_0(z)|^2+\ldots+|f_N(z)|^2}.$$
Obviously, $u$ is a positive subharmonic function . 
Let $\triangle u$ be the Riesz measure of $u$, that is the measure with the density
$$\triangle u=\dfrac{1}{\pi}\|f'\|^2.$$
Let 
$$\mathbb{D}=\{z\in\C
: |z|< 1\}, r\mathbb{D}=\{z: |z|< r\}$$
throughout the paper. Let $\Omega$ be a domain in $\mathbb{C}.$ The Ahlfors-Shimizu characteristic function of $f$ is defined as follows
\begin{align*}
	T_f(r,\Omega)=\int_1^r\dfrac{\triangle u(\Omega\bigcap t\mathbb{D})}{t}dt+O(1).
\end{align*}
It is well-known that 
$$T_f(r,\mathbb{C})=T_f(r)=\dfrac{1}{2\pi}\int_0^{2\pi}u(re^{i\theta})d\theta+O(1).$$
Let $D$ be a hypersurface in $\P^N(\C)$ defining by a homogeneous polynomial $Q$.  
For a domain $\Omega$ denote by $n_f(r, \Omega, D)$ the number of zeros of $Q\circ {\bf f}$ in the  domain $\{|z|<r\}\bigcap\Omega,$ counting multiplicity. The counting function is defined as follows:
$$N_f(r,\Omega, D)=\int_{0}^{r}\dfrac{n_f(r, \Omega, D)-n_f(0, \Omega, D)}{t}dt+n_f(0,\Omega, D)\log r.$$
In this paper, we consider  angular domains $\Omega (\theta, \epsilon), 0\leq \theta\leq 2\pi, 0<\epsilon<\pi$ defined as follows
$$\Omega (\theta, \epsilon)=\{z: \theta-\epsilon<\arg z<\theta+\epsilon\}.$$
Sometimes, we write simply $\Omega$ instead of $\Omega(\theta, \epsilon)$ if there is no confusion.
\begin{The}\label{t2}
	 Let $f: \C\rightarrow V$ be a holomorphic curve satisfying $\|f'(z)\|=O(|z|^{\sigma})$ for some $\sigma>-1$. Let $D_1,\ldots, D_q$ be hypersurfaces of degrees $d_1,\ldots, d_q$ in $\P^N(\C)$. Let $\Omega=\Omega(\theta, \epsilon)$ be any angular domain in $\C.$
	
	(i) Let $\dim V\leq m\in \mathbb{N}.$ If  $D_1,\ldots, D_q$ are located in $m-$subgeneral position in $V$ then we have
	$$(q-m+1)T_f(r, \Omega)\leq \sum_{1\leq i\leq q}\dfrac{1}{d_i}N_f(r, \Omega, D_i)+o(r^{2\sigma+2}).$$
	
	(ii) If $V\bigcap\supp D_1\bigcap\ldots \bigcap\supp D_q=\emptyset$ then we have
	$$2T_f(r, \Omega)\leq \sum_{1\leq i\leq q}\dfrac{1}{d_i}N_f(r, \Omega, D_i)+o(r^{2\sigma+2}).$$
	
	iii) If $\dim(V\bigcap\supp D_1\bigcap\ldots \bigcap\supp D_q)=0$ then we have
	$$T_f(r, \Omega)\leq \sum_{1\leq i\leq q}\dfrac{1}{d_i}N_f(r, \Omega, D_i)+o(r^{2\sigma+2}).$$
\end{The}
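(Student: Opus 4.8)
The plan is to reduce all three inequalities to a single kind of estimate — an upper bound for the total proximity of $f$ to the $D_i$ over the sector — and then to use the growth hypothesis to render every correction term $o(r^{2\sigma+2})$. Writing $v_i(z)=\log\bigl(\|\mathbf f(z)\|^{d_i}/|Q_i(\mathbf f(z))|\bigr)$ for the proximity potential of $D_i$ (so $v_i\ge -O(1)$, since $|Q_i|\le C\|\cdot\|^{d_i}$ on the cone over $V$), I would first establish the first main theorem over the angular domain in the form $\frac1{d_i}N_f(r,\Omega,D_i)+m_f(r,\Omega,D_i)=T_f(r,\Omega)+O(1)$, where $m_f(r,\Omega,D_i)$ is the sector average of $\frac1{d_i}v_i$. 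This is a Jensen/Carleman formula for $\Omega(\theta,\epsilon)$ applied to the subharmonic function $u$ and to $\log|Q_i\circ\mathbf f|$, exactly as $T_f(r,\Omega)$ and $N_f(r,\Omega,D_i)$ were defined from Riesz masses in the statement. Summing over $i$ gives $\sum_i\frac1{d_i}N_f(r,\Omega,D_i)=q\,T_f(r,\Omega)-\sum_i m_f(r,\Omega,D_i)+O(1)$, so the three conclusions (i), (ii), (iii) become the three bounds $\sum_{i=1}^q m_f(r,\Omega,D_i)\le c\,T_f(r,\Omega)+o(r^{2\sigma+2})$ with $c=m-1$, $c=q-2$ and $c=q-1$ respectively.

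The constant $c$ in each case comes from an algebraic comparison on $V$ that limits how many of the $D_i$ the curve can be simultaneously close to. For (i) I would run a Nochka-type argument driven by the $m$-subgeneral position hypothesis: at each $z$, after reindexing so that $|Q_1(\mathbf f)|^{1/d_1}\le\cdots\le|Q_q(\mathbf f)|^{1/d_q}$, the condition $\dim\bigl(\bigcap_{i\in I}\supp D_i\cap V\bigr)\le m-\sharp I$ forces a lower bound on the product of any more than $m$ of the smallest of these quantities, which caps the number of genuinely small factors and, weighted appropriately, turns $\sum_i\frac1{d_i}v_i(z)$ into at most $(m-1)u(z)$ plus a residual ``how-close'' term. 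For (ii) the emptiness of $V\cap\bigcap_i\supp D_i$ gives, through the Nullstellensatz on the affine cone, a uniform estimate $\max_i|Q_i(\mathbf x)|^{1/d_i}\ge\delta\|\mathbf x\|$, and it is this one inequality that produces the weight $q-2$. For (iii), the common intersection being finite means the auxiliary map $\mathbf x\mapsto[Q_1^{D/d_1}:\cdots:Q_q^{D/d_q}]$ with $D=\operatorname{lcm}_i d_i$ is a morphism off finitely many points of $V$, and comparing $f$ with it yields the weaker weight $q-1$.

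The analytic heart is the passage from these pointwise comparisons to the integrated bound over the sector, and this is exactly where $\|f'(z)\|=O(|z|^\sigma)$ with $\sigma>-1$ is used. It gives $T_f(r)=O(r^{2\sigma+2})$, and since $\sigma>-1$ we have $2\sigma+2>0$, so any term of order $O(\log r)$ or $O(\log^+ T_f(r))$ is $o(r^{2\sigma+2})$. The residual ``how-close'' term left over from the position lemma is, as in Cartan's argument, governed by a logarithmic derivative of a (generalized) Wronskian, and a sector version of the logarithmic derivative lemma bounds its sector average by such a term; the Brody-type hypothesis is precisely what collapses this error to $o(r^{2\sigma+2})$, playing the role that boundedness of $\|f'\|$ plays in the Brody case treated by Thai–Mai.

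I expect the main obstacle to be this sector version of the derivative estimate: on a proper angle $\Omega(\theta,\epsilon)$ the rotational symmetry underlying the usual logarithmic derivative lemma and Jensen's formula is lost, so the contributions from the two bounding rays $\arg z=\theta\pm\epsilon$ must be shown, uniformly in $r$, not to dominate the main term — the point at which the Carleman-type formula and the Thai–Mai formalism must be pushed. A secondary delicate point is to verify that the Nochka reordering in (i) produces exactly the weight $m-1$ and not a larger constant, since the sharp constant, the untruncated counting function, and the absorption of the Wronskian term interact; once the growth condition guarantees that whatever genuine error survives is $o(r^{2\sigma+2})$, the three inequalities follow.
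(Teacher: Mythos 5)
Your outline diverges from the paper's proof and, as it stands, has two gaps that I do not see how to close. First, your starting point --- a first main theorem over the sector of the form $\frac{1}{d_i}N_f(r,\Omega,D_i)+m_f(r,\Omega,D_i)=T_f(r,\Omega)+O(1)$ --- is not available. With $T_f(r,\Omega)$ defined by integrating the Riesz mass of $u$ over $\Omega\cap t\mathbb{D}$ and $N_f(r,\Omega,D_i)$ by counting zeros in the sector, there is no Jensen identity: Jensen/Carleman formulas live on full circles or carry weighted boundary integrals along the two rays $\arg z=\theta\pm\epsilon$, and those boundary contributions can be comparable to $T_f(r)$ itself, not $O(1)$. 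You flag this as ``the main obstacle'' but offer no mechanism to control it. The paper sidesteps the issue entirely: it never forms a proximity function. Instead it shows (Lemmas \ref{l1}--\ref{l3}) that after rescaling $z\mapsto r_kz$ and normalizing by $r_k^{2\sigma+2}$, the functions $u_i-h_k$ and $u-h_k$ converge to subharmonic limits $\nu_i,\nu$ with $\nu=\max_{i\in I}\nu_i$ for every $|I|=m$, so that $\sum_i\nu_i-(q-m+1)\nu$ is subharmonic (the Duval--Da Costa combinatorial lemma); integrating the resulting inequality between the Laplacians $\triangle u$ and $\triangle u_i$ over the sector gives the theorem directly, with the error absorbed into $o(r^{2\sigma+2})$.

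Second, your route cannot produce the constant $q-m+1$. A pointwise Nochka/Cartan reordering for hypersurfaces in $m$-subgeneral position gives, for general curves, at best Quang's defect bound $(m-n+1)(n+1)$ --- the paper says so explicitly --- and the Wronskian/logarithmic-derivative machinery you invoke requires a nondegeneracy hypothesis that Theorem \ref{t2} does not impose (for degenerate $f$ the generalized Wronskian vanishes identically). The sharp constant here comes from a different place: the hypothesis $\|f'\|=O(|z|^\sigma)$ is fed into Barrett--Eremenko's Proposition 2, which upgrades the elementary identity $u=\max_{1\leq i\leq m+1}u_i+O(1)$ to $u\leq\max_{1\leq i\leq m}u_i+O(|z|^{\sigma+1})$, i.e.\ one index can be dropped from the max at the cost of an error of order $r^{\sigma+1}$, which is $o(r^{2\sigma+2})$ after normalization. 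It is exactly this step --- absent from your plan, which uses the derivative bound only to kill logarithmic error terms at the end --- that makes $\nu=\max_{i\in I}\nu_i$ hold for all $m$-element subsets $I$ and hence yields the coefficient $q-m+1$. Without it, neither the subgeneral-position case (i) nor its specializations (ii) and (iii) follow with the stated constants.
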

Remark that, we obtain the part (ii) from the part (i) by observing that $D_1,\ldots, D_q$ satisfying the condition  $V\bigcap\supp D_1\ldots \bigcap\supp D_q=\emptyset$ could be considered to be located in $(q-1)-$subgeneral position. Similarly, the part (iii) comes from the part (i) by observing that $D_1,\ldots, D_q$ satisfying the condition  $\dim(V\bigcap\supp D_1\bigcap\ldots \bigcap\supp D_q)=0$ could be considered to be located in $q-$subgeneral position.

As a consequence of Theorem \ref{t2}, for holomorphic curve satisfying $\limsup\limits_{r\rightarrow\infty} \dfrac{T_f(r)}{r^{2\sigma+2}}>0$, we have the defect relation 
$$\sum_{1\leq i\leq q}\delta_{D_i}(f)\leq m-1$$
for a family of hypersurfaces $D_1,\ldots, D_q$ located in $m-$subgeneral position. For a general holomorphic curve, the best result available at present is due to Quang\cite{Q}, where the defect relation is $(m-n+1)(n+1)$ for the case of hypersurfaces $D_1,\ldots, D_q$ located in $m-$subgeneral position. For the case of arbitrary family of hypersurfaces, we do not know any result on that subject.

The last aim of this paper is to study singular directions of holomorphic curves. The existence of Julia directions was proved by G. Julia \cite{Ju} in 1920 for all entire
function and by Milloux in 1924 and Valiron in 1938 for most of meromorphic
functions. For general holomorphic curves from $\C$ to $\P^N(\C)$, it has been studied by Eremenko \cite{Ere}, Zheng\cite{Zh},...In \cite{Ere}, Eremenko showed the existence of Julia directions for some special classes of holomorphic curves into projective space and gave a definition of general Julia
directions for the “subvariety” case.
\begin{Def}\cite[p.4]{Ere}
A number $\theta \in [0;2\pi)$ is called a Julia direction for a holomorphic curve $f: \C\rightarrow V$ 
if for every system of divisors $D_1,\ldots, D_q$ such that any $n+1$ of them has empty
intersection, and for any $\pi>\epsilon>0$ all but at most $2n$ of these divisors have infinitely
many preimages in the angle $\{z:|arg z-\theta|<\epsilon\}$	.
\end{Def}
Recently, using the ideas of Eremenko \cite{Ere}, Zheng \cite[p.867]{Zh}, Thai and Mai \cite{TM} gave the notion on singular directions for a holomorphic curve into $\P^N(\C)$ intersecting hypersurfaces in general position.
\begin{Def}\cite[Def 3.1, p. 867]{Zh} A ray $J(\theta)=\{z\in\C:\arg z=\theta\}$ is a $T-$ direction for a holomorphic curve $f:\C\rightarrow\P^N(\C)$ if for any $\epsilon (0<\epsilon <\pi)$, we have
	$$\limsup\limits_{r\rightarrow\infty}\dfrac{N_f(r, \Omega, H)}{T_f(r)}=0$$
	for at most $2N$ hyperplanes $H$ in general position in $\P^N(\C).$
\end{Def}
	\begin{Def} \cite{TM}Let $V$ be a complex projective variety in $\P^N(\C)$ of dimension $n\geq 1.$ Let $f: \C\rightarrow V$ be a holomorphic curve. A ray $J(\theta)$ is called a $\bar{T}-direction$ for $f$ if for any $\epsilon (0<\epsilon <\pi)$, we have
		$$\limsup_{r\rightarrow\infty}\dfrac{N_f(r,\Omega, D)}{T_f(r)}=0$$
		for at most $n-1$ hypersurfaces $D$ located in general position in $V.$
		\end{Def}
Obviously, $\bar{T}-$direction is a $T-$ direction and they are Julia directions in the sense of Eremenko \cite{Ere}. However, this definition depends on the dimension of variety which is different from Eremenko's notion. We now extend Thai-Mai's notion on singular directions to not depend on the dimension of variety. Only the intersection pattern is relevant.
	\begin{Def}
Let $V$ be a complex projective variety in $\P^N(\C)$. Let $f: \C\rightarrow V$ be a holomorphic curve. A ray $J(\theta)$ is called a $\bar{T}-direction$ for $f$ if for any $\epsilon (0<\epsilon <\pi)$ and for any hypersurfaces $D_1,\ldots, D_q$ in $\P^N(\C)$ satisfying $\bigcap_{1\leq i\leq q}\supp D_i\bigcap V$ consisting of finite points,  we have
$$\max_{1\leq i\leq q}\limsup\limits_{r\rightarrow\infty}\dfrac{N_f(r,\Omega, D_i)}{T_f(r)}>0$$
\end{Def}
Based on Theorem \ref{t2}, we state the third main result of this paper.
\begin{The}\label{t3}
		Let $f: \C\rightarrow V$ be a holomorphic curve satisfying $\|f'(z)\|=O(|z|^{\sigma})$ for some $\sigma>-1$. If $\limsup\limits_{r\rightarrow\infty}\dfrac{T_f(r)}{r^{2\sigma+2}}>0$ then $f$ has at least one $\bar{T}-$direction.
	\end{The}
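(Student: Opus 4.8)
The plan is to argue by contradiction: suppose $f$ satisfies the growth condition and $\limsup_{r\to\infty} T_f(r)/r^{2\sigma+2}>0$, but $f$ has *no* $\bar T$-direction. I need to show this is impossible. The negation of "$\theta$ is a $\bar T$-direction" says: there exist hypersurfaces $D_1^{(\theta)},\dots,D_{q_\theta}^{(\theta)}$ whose common intersection with $V$ is a finite set of points, and an $\epsilon_\theta>0$, such that $\max_i \limsup_{r\to\infty} N_f(r,\Omega(\theta,\epsilon_\theta),D_i^{(\theta)})/T_f(r)=0$ — i.e., *every one* of these counting functions is $o(T_f(r))$. If no direction is a $\bar T$-direction, then every $\theta\in[0,2\pi)$ carries such a "bad" data $(\{D_i^{(\theta)}\},\epsilon_\theta)$.

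**First I would** localize and compactify. For each $\theta$, apply Theorem~\ref{t2}(iii) to the angular domain $\Omega(\theta,\epsilon_\theta)$ and the family $D_1^{(\theta)},\dots,D_{q_\theta}^{(\theta)}$, which by hypothesis satisfies $\dim(V\cap\bigcap_i\supp D_i^{(\theta)})=0$. This gives
\begin{align*}
T_f(r,\Omega(\theta,\epsilon_\theta))\leq \sum_{i}\frac{1}{d_i}N_f(r,\Omega(\theta,\epsilon_\theta),D_i^{(\theta)})+o(r^{2\sigma+2}).
\end{align*}
Since each $N_f(r,\Omega,D_i^{(\theta)})=o(T_f(r))$ and $T_f(r)=O(r^{2\sigma+2})$, every term on the right is $o(r^{2\sigma+2})$, so $T_f(r,\Omega(\theta,\epsilon_\theta))=o(r^{2\sigma+2})$. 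Thus each bad direction forces the characteristic restricted to its angular sector to be negligible relative to the global order $r^{2\sigma+2}$.

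**Then I would** cover the circle. The open angles $\{\Omega(\theta,\epsilon_\theta)\}_{\theta\in[0,2\pi)}$ cover all directions, so by compactness of the unit circle finitely many of them, say $\Omega(\theta_1,\epsilon_1),\dots,\Omega(\theta_p,\epsilon_p)$, cover $\C\setminus\{0\}$. Using the additivity (up to bounded overlap) of the Riesz mass $\triangle u$ over sectors — precisely, $\triangle u(t\mathbb D)\leq \sum_{k=1}^p \triangle u(\Omega(\theta_k,\epsilon_k)\cap t\mathbb D)$ since the sectors cover — and integrating against $dt/t$, I obtain
\begin{align*}
T_f(r,\C)\leq \sum_{k=1}^p T_f(r,\Omega(\theta_k,\epsilon_k))+O(1)=o(r^{2\sigma+2}).
\end{align*}
But $T_f(r,\C)=T_f(r)$, so $T_f(r)=o(r^{2\sigma+2})$, which flatly contradicts $\limsup_{r\to\infty} T_f(r)/r^{2\sigma+2}>0$. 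Hence at least one direction must be a $\bar T$-direction.

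**The hard part will be** the covering/additivity step: verifying that the finitely many overlapping sectors control the global characteristic cleanly. The overlaps are wedge-shaped regions, and I must ensure that double-counting the Riesz mass there only inflates the bound by a constant factor (absorbable into the $o(r^{2\sigma+2})$), and that the $O(1)$ error terms in the definition of $T_f(r,\Omega)$ accumulate harmlessly over finitely many sectors. I expect this to be routine given that $\triangle u$ is a genuine positive measure, so subadditivity over a finite cover is automatic; the only care needed is that each sector's contribution is genuinely $o(r^{2\sigma+2})$ \emph{uniformly}, which follows because there are only finitely many sectors in the final cover. A secondary subtlety is confirming that the finitely many selected families $\{D_i^{(\theta_k)}\}$ each independently satisfy the zero-dimensional intersection hypothesis so that Theorem~\ref{t2}(iii) genuinely applies to each — but this is guaranteed direction-by-direction by the failure of the $\bar T$-direction condition.
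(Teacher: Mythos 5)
Your proposal is correct and follows essentially the same route as the paper: negate the definition to get a bad family of hypersurfaces and an angle for each direction, cover the plane by finitely many such sectors via compactness, apply Theorem \ref{t2}(iii) sector by sector, and use subadditivity of the Riesz mass to conclude $T_f(r)=o(r^{2\sigma+2})$, contradicting the hypothesis. The only (harmless) difference is that you convert each sectorial characteristic to $o(r^{2\sigma+2})$ before summing, whereas the paper sums first and then invokes the vanishing of the normalized counting functions.
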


 We would like to remark that Theorem \ref{t2} and Theorem \ref{t3} imply Theorem \cite[Theorem 1.7]{TM} and Theorem \cite[Theorem 1.8]{TM}, respectively.
 \section{Some Lemmas}
Let $u$ be a subharmonic function in a domain $\Omega\subset \C.$ Let $\triangle u$	be the Riesz measure of $u$. We recall the following lemma due to Duval and Da Costa \cite[p.1599]{DD} reformulated by Thai-Mai \cite[Lemma 2.3]{TM}.
\begin{Lem}\cite[Lemma 2.3]{TM}(see also \cite[p.1599]{DD})\label{l2}
	Let $q, m$ be positive integers. Let $\nu_i, 1\leq i\leq q$ and $\nu$ be subharmonic functions in $\Omega.$ Assume that $\triangle\nu$ is $L^\infty$ on $\Omega$ and $\nu=\max_I\nu_i$ for any subset $I\subset\{1,\ldots,q\}, |I|=m.$ Then $\sum_{1\leq i\leq q}\nu_i-(q-m+1)\nu$ is subharmonic in $\Omega.$
\end{Lem}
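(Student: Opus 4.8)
The plan is to prove the assertion by showing that the distributional Laplacian of $W:=\sum_{1\le i\le q}\nu_i-(q-m+1)\nu$ satisfies $\triangle W\ge 0$ as a measure; since $W$ is upper semicontinuous and locally integrable (a sum of subharmonic functions minus a continuous one), this is equivalent to $W$ being subharmonic. First I would unwind the hypothesis. Asking that $\nu=\max_{i\in I}\nu_i$ for \emph{every} subset $I$ of size $m$ is equivalent, at each point $z$, to requiring the $(q-m+1)$ largest of the numbers $\nu_1(z),\dots,\nu_q(z)$ to be equal and to coincide with $\nu(z)$: taking $I$ to consist of the $m$ indices with the smallest values forces $\nu(z)$ to equal the $(q-m+1)$-th largest value, while taking $I$ to contain an index realizing the maximum forces $\nu(z)=\max_i\nu_i(z)$. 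In particular $\nu_i\le\nu$ for all $i$, and, writing $A_i:=\{z\in\Omega:\nu_i(z)=\nu(z)\}$, every point of $\Omega$ lies in at least $q-m+1$ of the sets $A_i$.

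Next I would pass to the Lebesgue decomposition of the Riesz measures. Each $\nu_i$ is subharmonic, so $\triangle\nu_i=g_i\,dm+\sigma_i$ with density $g_i\ge0$ and singular part $\sigma_i\ge0$, while by hypothesis $\triangle\nu=g\,dm$ with $g\in L^\infty$ and no singular part. Hence
$$\triangle W=\Big(\sum_{1\le i\le q}g_i-(q-m+1)g\Big)\,dm+\sum_{1\le i\le q}\sigma_i,$$
so the singular part $\sum_i\sigma_i$ is automatically nonnegative and the problem reduces to the pointwise inequality $\sum_{1\le i\le q}g_i\ge(q-m+1)g$ almost everywhere. To obtain it I would invoke the density-matching principle: two functions whose Laplacians are signed measures share the same absolutely continuous Laplacian density almost everywhere on the set where the functions agree. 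Applied to the pair $\nu_i\le\nu$, which coincide on $A_i$, this gives $g_i=g$ a.e. on $A_i$. Granting this, at almost every $z$ there are at least $q-m+1$ indices $i$ with $z\in A_i$, for each of which $g_i(z)=g(z)$, while the remaining densities are nonnegative; summing yields $\sum_i g_i(z)\ge(q-m+1)g(z)$, which closes the argument.

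The hard part is precisely the density-matching step $g_i=g$ a.e. on $A_i$, and this is where the hypothesis $\triangle\nu\in L^\infty$ does the essential work. On the open set where the collection of indices realizing the maximum is locally constant, $W$ coincides with a finite sum of the remaining $\nu_i$ and is subharmonic for free; the genuine difficulty lives on the coincidence locus, a set that may carry positive Lebesgue measure, where the partition into maximizing and non-maximizing indices jumps and the naive ``locally a sum of subharmonic functions'' argument fails. To justify the matching I would note that $g\in L^\infty$ forces $\nu\in W^{2,p}_{loc}$ for every $p$, hence $\nu\in C^{1}$ and $\nu$ is twice approximately differentiable almost everywhere; at almost every density point of $A_i$ the nonnegative function $\nu-\nu_i$ attains its minimum value $0$ on a set of density one, so its approximate Hessian, and therefore the difference $g-g_i$ of absolutely continuous Laplacian densities, vanishes there. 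Thus the absolute continuity of $\triangle\nu$ simultaneously rules out a negative singular contribution and transfers the coincidence $\nu_i=\nu$ down to the level of Laplacian densities, which is exactly what the reduction above requires.
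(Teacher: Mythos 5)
The paper itself does not prove this lemma: it is quoted from Thai--Mai \cite[Lemma 2.3]{TM} and Da Costa--Duval \cite[p.~1599]{DD}, so there is no in-house argument to compare yours against, and I can only assess the proposal on its own terms. Your unwinding of the hypothesis is correct (at each point the $q-m+1$ largest of the $\nu_i(z)$ all equal $\nu(z)$, and $\nu_i\le\nu$), and so is the reduction: the singular parts of the $\triangle\nu_i$ are nonnegative, so everything hinges on the pointwise inequality $\sum_i g_i\ge(q-m+1)g$ a.e., which you propose to get from $g_i=g$ a.e.\ on $A_i=\{\nu_i=\nu\}$. That density-matching step is where the entire difficulty of the lemma is concentrated, and your justification of it does not close the gap.

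Two concrete problems. First, the regularity you establish ($\triangle\nu\in L^\infty$ gives $\nu\in W^{2,p}_{loc}$, hence $\nu\in C^1$ and twice approximate differentiability a.e.) concerns $\nu$ only; the function whose approximate Hessian you need is $\nu-\nu_i$, and $\nu_i$ is a general subharmonic function. Its Riesz measure can have a singular part, its full distributional Hessian need not even be a measure (for $\log|z|$ the second derivatives are principal values), and both the a.e.\ existence of a second-order approximate Taylor polynomial for such a function and the identification of its approximate Laplacian with the density $g_i$ of the absolutely continuous part of $\triangle\nu_i$ are nontrivial Calder\'on--Zygmund-type facts that you assert rather than prove. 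Second, and more tellingly, the elementary information actually available at a point $z_0\in A_i$ --- namely that $\nu-\nu_i\ge 0$ attains its minimum $0$ there, equivalently that the spherical means satisfy $L(\nu_i,z_0,r)\le L(\nu,z_0,r)$ while $\nu_i(z_0)=\nu(z_0)$ --- combines with Jensen's formula and Lebesgue differentiation to give $g_i\le g$ a.e.\ on $A_i$, which is precisely the direction that is useless for the conclusion. The direction you need, $g_i\ge g$ a.e.\ on $A_i$, is the one for which no valid argument is supplied: ``vanishes on a set of density one, hence the approximate Hessian vanishes'' is the right heuristic, but it presupposes that the approximate Hessian of $\nu_i$ exists a.e.\ and is compatible with the Riesz decomposition, neither of which comes for free. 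To repair the proof you would need to import the precise second-order approximate differentiability theorem for functions whose Laplacian is a measure (and verify the identification of the approximate Laplacian with $g_i$), or else follow the potential-theoretic argument of \cite{DD}, which is what the present paper implicitly relies on.
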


	 Let $f: \C\rightarrow V$ be a holomorphic curve. We choose a homogeneous representation  ${\bf f}=(f_0,\ldots, f_N)$ of our curve.
	
	Let $D_1,\ldots, D_q$ be hypersurfaces of degrees $d_1,\ldots, d_q$ in $\P^N(\C)$, defining by homogeneous polynomials $Q_1,\ldots, Q_q\in\C[X_0,\ldots, X_N]$, located in $m-$subgeneral position in $V.$ Replacing $Q_i$ by $Q_i^{d/d_i}$ if necessary, where $d$ is the l.c.m of $d_i, 1\leq i\leq q,$ without loss of generality, we can assume that $Q_i, 1\leq i\leq q$ have the same degree $d$. Define
	$$ u=\log \|{\bf f}\|=\log\sqrt{|f_0|^2+\ldots+ |f_N|^2},$$
	 $$u_i=\dfrac{1}{d}\log |Q_i(f_0,\ldots, f_N)|, 1\leq i\leq q.$$
	 We will prove the following.
	 \begin{Lem}\label{l1}Let $f: \C\rightarrow V$ be a holomorphic curve satisfying $\|f'(z)\|=O(|z|^\sigma)$ for some $\sigma>-1.$ Let $r_k$ be a sequence of positive real numbers which converges to $+\infty.$ Let $h_k$ be the smallest harmonic majorant of $u$ in the disk $2r_k\mathbb{D}$ and $\lambda_k: \C\rightarrow \C$ be the map given by $\lambda_k(z)=r_kz.$ Then, there exist subsequences of the sequences $\frac{1}{r_k^{2\sigma+2}}(u_i-h_k)\circ \lambda_k$ and $\frac{1}{r_k^{2\sigma+2}}(u-h_k)\circ \lambda_k$ which converge to subharmonic functions $\nu_i$ and $\nu$, respectively, and  $\nu=\max_I\nu_i$ for any subset $I\subset\{1,\ldots,q\}$ such that $|I|=m.$
	 \end{Lem}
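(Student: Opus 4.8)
The plan is to read Lemma \ref{l1} as a normal-families statement for subharmonic functions: first produce uniform estimates forcing precompactness of the rescaled sequences, then extract one common convergent subsequence, and finally identify the limits using the $m$-subgeneral position hypothesis. Throughout I work on the fixed disk $2\mathbb{D}$, which is the common domain after rescaling (since $\lambda_k$ carries $2\mathbb{D}$ onto the disk $2r_k\mathbb{D}$ where $h_k$ lives).

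First I would record the two elementary facts about the unscaled functions. Since $Q_i$ is homogeneous of degree $d$ we have $|Q_i(x)|\le C\|x\|^d$, hence $u_i\le u+O(1)$; and since $h_k$ is the least harmonic majorant of $u$ on $2r_k\mathbb{D}$ we have $h_k\ge u$ there, so $u-h_k\le 0$ and $u_i-h_k\le O(1)$ on $2r_k\mathbb{D}$. Composing with $\lambda_k$ and dividing by $r_k^{2\sigma+2}\to\infty$, the two families $w_k:=\tfrac{1}{r_k^{2\sigma+2}}(u-h_k)\circ\lambda_k$ and $w_k^{(i)}:=\tfrac{1}{r_k^{2\sigma+2}}(u_i-h_k)\circ\lambda_k$ are subharmonic on $2\mathbb{D}$ and bounded above by $0$ and by $o(1)$, respectively.

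Next I would bound the Riesz masses. As $h_k$ is harmonic, $\triangle w_k=\tfrac{1}{r_k^{2\sigma+2}}\lambda_k^{\ast}(\triangle u)$ has density $\tfrac{1}{\pi r_k^{2\sigma}}\|f'(r_kz)\|^2$ at $z$; the hypothesis $\|f'\|=O(|z|^\sigma)$ makes this $O(|z|^{2\sigma})$ uniformly in $k$, a fixed $L^1_{loc}$ density because $\sigma>-1$. For $w_k^{(i)}$ the Riesz measure is the rescaled zero-divisor of $Q_i\circ{\bf f}$, whose mass on $2\mathbb{D}$ is $\asymp r_k^{-(2\sigma+2)}n_f(2r_k,\C,D_i)$; by the First Main Theorem $N_f(r,\C,D_i)\le d\,T_f(r)+O(1)=O(r^{2\sigma+2})$, and the elementary estimate $n_f(r)\log 2\le N_f(2r)$ gives $n_f(2r_k,\C,D_i)=O(r_k^{2\sigma+2})$, so these masses are uniformly bounded as well. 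With a uniform upper bound and locally bounded Riesz mass, the compactness theorem for subharmonic functions yields, after passing to a subsequence and diagonalizing over $i$, convergence in $L^1_{loc}(2\mathbb{D})$ of all $q+1$ families to subharmonic functions $\nu$ and $\nu_i$; the Riesz representation rules out the limits being $\equiv-\infty$. Finally, $w_k^{(i)}-w_k=\tfrac{1}{r_k^{2\sigma+2}}(u_i-u)\circ\lambda_k\le o(1)$ passes to the limit to give $\nu_i\le\nu$, hence $\max_{i\in I}\nu_i\le\nu$ for every $I$.

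It remains to prove $\nu\le\max_{i\in I}\nu_i$ for every $I$ with $|I|=m$, and this is the main obstacle. I would deduce it from the claim that for almost every $z$ at least $q-m+1$ of the indices satisfy $\nu_i(z)=\nu(z)$: any $I$ with $|I|=m$ omits only $q-m$ indices and must then contain a maximizer, giving $\max_{i\in I}\nu_i(z)=\nu(z)$. For the claim, $m$-subgeneral position makes any $m+1$ of the $D_i$ disjoint on $V$, so a Nullstellensatz (compactness) argument gives $\max_{j\in J}|Q_j(x)|\ge c\|x\|^d$ on the cone over $V$ whenever $|J|=m+1$, whence $\max_{j\in J}u_j\ge u-O(1)$ and, in the limit, $\max_{j\in J}\nu_j=\nu$ for $|J|=m+1$. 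This already forces at each point \emph{at most} $m$ indices to satisfy $\nu_i<\nu$, so at least $q-m$ satisfy $\nu_i=\nu$; the remaining gap is the set where exactly $m$ of them vanish, which is precisely the set of $z$ at which $f(r_kz)$ clusters at the finite set $\bigcup_{|I|=m}\bigl(\bigcap_{i\in I}\supp D_i\bigcap V\bigr)$ (a $0$-dimensional set by subgeneral position). The crux, and where I expect the real work to lie, is to show that this clustering set is negligible in the rescaling limit, so that the a.e. identity can be promoted to the everywhere identity $\nu=\max_{i\in I}\nu_i$ by upper semicontinuity; the natural route is to estimate the rescaled proximity of $f$ to the finite intersection $\bigcap_{i\in I}\supp D_i\bigcap V$ and argue that, because this target has positive codimension in $V$, its contribution is $o(r^{2\sigma+2})$ and hence disappears after normalization.
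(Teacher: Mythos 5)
Your setup (rescaled non-positive subharmonic functions on $2\mathbb{D}$, compactness, $\nu_i\le\nu$, and $\max_{j\in J}\nu_j=\nu$ for $|J|=m+1$ via a Nullstellensatz/compactness bound) matches the paper. But the heart of the lemma --- the inequality $\nu\le\max_{i\in I}\nu_i$ for $|I|=m$, i.e.\ dropping one more index from the max --- is exactly the step you leave open. Your counting argument only yields at least $q-m$ maximizers at each point, and the proposed repair (``the clustering set near $\bigcap_{i\in I}\supp D_i\cap V$ is negligible because the target has positive codimension'') is not an argument: nothing in general position prevents $f(r_kz)$ from spending a set of $z$ of positive measure near that finite set, and no mechanism is given that converts positive codimension of the target into an $o(r^{2\sigma+2})$ bound. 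The paper's resolution is a genuinely different and essential input: it forms the auxiliary curve $g=(Q_{m+1}\circ{\bf f},Q_1\circ{\bf f},\ldots,Q_m\circ{\bf f}):\C\to\P^m(\C)$, which inherits $\|g'(z)\|=O(|z|^\sigma)$ because $(Q_{m+1},Q_1,\ldots,Q_m)$ is a morphism on the compact variety $V$ (no common zeros, by $m$-subgeneral position), and then applies Barrett--Eremenko's generalization of the Clunie--Hayman theorem to $g$. That proposition gives pointwise $\log\|g\|\le\max_{1\le i\le m}\log|g_i|+O(|z|^{\sigma+1})$, i.e.\ one coordinate can be removed from the max at a cost of $O(|z|^{\sigma+1})=o(|z|^{2\sigma+2})$, which vanishes after the normalization by $r_k^{2\sigma+2}$ (here $\sigma>-1$ is used). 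This is where the derivative hypothesis does its real work, and your sketch never invokes it at this stage.

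A secondary but genuine flaw: you assert that a uniform upper bound plus locally bounded Riesz mass, ``via the Riesz representation,'' rules out the limits being $\equiv-\infty$. It does not: the constants $w_k\equiv-k$ have zero Riesz mass and tend to $-\infty$. One must exhibit a pointwise lower bound along the subsequence, which the paper does by computing $h_k(0)=\frac{1}{2\pi}\int_0^{2\pi}u(2r_ke^{i\theta})\,d\theta=O(r_k^{2\sigma+2})$ from $T_f(r)=O(r^{2\sigma+2})$ (and, for the $u_i$, by a Poisson-kernel estimate at a point where $Q_i\circ{\bf f}\ne0$). Your Riesz-mass bounds are still useful --- the paper needs them in Lemma \ref{l3} to show $\triangle\nu\in L^\infty$ --- but they do not close this step.
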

 \begin{proof} For simplicity, we can assume that $I=\{1,\ldots,m\}$.
 	If $q\geq m+1$, from the fact that
 	 since $D_i, 1\leq i\leq q$ are located in $m-$subgeneral position, we have $Q_i, 1\leq i\leq m+1$ have no common zeros in $V$. In the case of $q=m,$ we can choose a hypersuface $D_{m+1}$ of $\P^N(\C)$ defining by a homogeneous polynomial $Q_{m+1}$ such that $Q_i, 1\leq i\leq m+1$ have no common zeros in $V$.  Therefore, there exist two  positive constants $C_1, C_2 $ such that 
 $$C_1\leq \max_{1\leq i\leq m+1}|Q_i(\omega)|^{1/d}\leq C_2, \forall \omega\in \pi^{-1}(V)\bigcap\{\|z\|\leq 1\},$$ where  $\pi: \C^{N+1}\backslash\{0\}\mapsto \P^N(\C)$  be the standard projection. 
 Hence, 
 \begin{align}\label{e2001}
 	 C_1\|{\bf f}(z)\|\leq \max_{1\leq i\leq m+1}|Q_i\circ{\bf f}(z)|^{1/d}\leq C_2\|{\bf f}(z)\|, \forall z\in \C.
 \end{align}
It follows that 
 \begin{align}\label{e2}
  u=\max_{1\leq i\leq m+1}u_i+O(1)
 \end{align}
Consider the sequence of non-positive subharmonic functions on $2\mathbb{D}$ defined by $\dfrac{1}{r_k^{2\sigma+2}}(u-h_k)\circ\lambda_k.$ Note that $h_k(0)=\dfrac{1}{2\pi}\int_0^{2\pi} u(2r_ke^{i\theta})d\theta\leq O(r_k^{2\sigma+2})$ because $T_f(r)=O(r_k^{2\sigma+2}).$ Hence 
$\liminf\limits_{k\rightarrow \infty}\dfrac{1}{r_k^{2\sigma+2}}(u(0)-h_k(0))>-\infty$. So the sequence $\dfrac{1}{r_k^{2\sigma+2}}(u-h_k)\circ\lambda_k$ does not converge locally uniformly to $-\infty.$ By \cite[Theorem 4.1.9]{Hor}, there exists a subsequence of the sequence $\dfrac{1}{r_k^{2\sigma+2}}(u-h_k)\circ\lambda_k$  (still denoted by $k$) such that
$$\dfrac{1}{r_k^{2\sigma+2}}(u-h_k)\circ\lambda_k\rightarrow \nu,$$
where $\nu$ is a subharmonic function in $2\mathbb{D}.$ 

 For each $i,$ from \eqref{e2}, we have $u_i-h_k\leq u-h_k+O(1)<O(1).$ Therefore, $\dfrac{1}{r_k^{2\sigma+2}}(u_i-h_k)\circ\lambda_k$ is bounded above. Since $Q_i\circ{\bf f}\not=0$, there exists $y\in 2\mathbb{D}$ such that $u_i\circ\lambda_k(y)\not=-\infty.$ For sufficiently large $k$, $u(2r_ke^{i\theta})>0$ for all $\theta$. By Poisson's formula, we have
 \begin{align*}
 	h_k\circ\lambda_k(y)&=\dfrac{1}{2\pi}\int_{0}^{2\pi}\dfrac{4-|y|^2}{|2e^{i\theta}-y|^2}u\circ\lambda_k(2e^{i\theta})d\theta\\
 	&\leq \dfrac{2+|y|}{2\pi(2-|y|)}\int_{0}^{2\pi}u(2r_ke^{i\theta})d\theta\leq O(r_k^{2\sigma+2}).
 \end{align*}
Therefore, $\liminf\limits_{k\rightarrow \infty}\dfrac{1}{r_k^{2\sigma+2}}(u_i-h_k)\circ\lambda_k(y)>-\infty$. Hence,
 there exists a subsequence of the sequence $\dfrac{1}{r_k^{2\sigma+2}}(u_i-h_k)\circ\lambda_k$  (still denoted by $k$) such that
$$\dfrac{1}{r_k^{2\sigma+2}}(u_i-h_k)\circ\lambda_k\rightarrow \nu_i,$$
where $\nu_i$ is a subharmonic function in $2\mathbb{D}.$
 From \eqref{e2}, we have 
 \begin{align}\label{e4}
 	\nu=\max_{1\leq i\leq m+1}\nu_i.
 \end{align}
We now prove that 
$$\nu\leq \max_{1\leq i\leq m}\nu_i.$$
Indeed,	let $Q: V\rightarrow \P^m(\C)$ be a morphism defined by $$z\mapsto Q(z)=(Q_{m+1}(z), Q_1(z)\ldots, Q_{m}(z)).$$
Since $D_i, 1\leq i\leq q$ are located in $m-$subgeneral position, $Q_i, 1\leq i\leq m+1$ have no common zeros in $V$. Hence, $Q$ is a holomorphic mapping on $V$. Combining with the fact that $V$ is a compact complex projective variety and  $\|f'(z)\|=O(|z|^\sigma),$ we have $$g=(g_0,\ldots, g_{m}):=Q\circ {\bf f}: \C\rightarrow \P^m(\C)$$ satifying $\|g'(z)\|=O(|z|^\sigma)$.
By composing $g$ with an automorphism of $\P^m(\C)$, for example replace $g_0$ by $g_0+cg_1, c\in \C$ and leave all other $g_i$ unchanged, we can assume that $g_0$ has infinitely many zeros. Then, we can apply \cite[Proposition 2]{BE} due to Barret and Eremenko \cite{BE} to $g$. Then, for every $\epsilon >0,$ we have for $z$ sufficiently large,
\begin{align}
	\log\sqrt{|g_0|^2+\ldots+|g_m|^2}(z)\leq \max_{1\leq i\leq m}\log |g_i|(z)+K(2+\epsilon)^{\sigma+1}(m+1)|z|^{\sigma+1}.
\end{align}
where $K$ is a constant depending only on $f$ and $Q.$
%	\begin{Prop}\cite[Proposition 2]{BE}
	%		For every $\epsilon >0,$ we have
	%		$$u_0(z)\leq \max_{1\leq i\leq n}u_i(z)+K(2+\epsilon)^{\sigma+1}(n+1)|z|^{\sigma+1}$$
	%		for all $z>r_0(\epsilon)$
	%	\end{Prop}
Hence,
\begin{align}\label{e1}
	\max_{1\leq i\leq m+1}\log |Q_i\circ {\bf f}(z)|\leq d\max_{1\leq i\leq m}u_i(z)+K(2+\epsilon)^{\sigma+1}(m+1)|z|^{\sigma+1}.
\end{align}
 From \eqref{e1} and \eqref{e2001}, we have for $z$ sufficiently large
\begin{align}\label{e3}
	u(z)\leq \max_{1\leq i\leq m}u_i(z)+K(2+\epsilon)^{\sigma+1}(m+1)|z|^{\sigma+1}.
\end{align}
  From \eqref{e3}, we have
 \begin{align}\label{e5}
 	\nu\leq \max_{1\leq i\leq m}\nu_i.
 \end{align}
From \eqref{e4} and \eqref{e5}, we have $\nu=\max_{1\leq i\leq m}\nu_i.$
 \end{proof}
\begin{Lem} \label{l3}
$\sum_{1\leq i\leq q}\nu_i-(q-m+1)\nu$ is subharmonic in $2\mathbb{D}.$
\end{Lem}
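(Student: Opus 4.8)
The plan is to deduce the statement directly from Lemma \ref{l2}, whose hypotheses are furnished almost entirely by Lemma \ref{l1}: the functions $\nu_1,\dots,\nu_q$ and $\nu$ are subharmonic on $2\mathbb{D}$ and satisfy $\nu=\max_I\nu_i$ for every $I\subset\{1,\dots,q\}$ with $|I|=m$. The one remaining hypothesis to verify is that $\triangle\nu$ is $L^\infty$, and this is where the Fubini--Study growth condition enters.

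First I would compute the Riesz measure of the rescaled functions. Since $h_k$ is harmonic, $\triangle\bigl((u-h_k)\circ\lambda_k\bigr)$ has the same density as $\triangle(u\circ\lambda_k)$, namely $\frac{r_k^2}{\pi}\|f'(r_kz)\|^2$; dividing by $r_k^{2\sigma+2}$ shows that the Riesz measure of $\frac{1}{r_k^{2\sigma+2}}(u-h_k)\circ\lambda_k$ has density $\frac{r_k^{-2\sigma}}{\pi}\|f'(r_kz)\|^2$. Using $\|f'(w)\|=O(|w|^\sigma)$ for $|w|$ large, this density is bounded by $\frac{C}{\pi}|z|^{2\sigma}$ as soon as $k$ is large enough that $r_k|z|$ lies in the range where the growth estimate applies. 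Passing to the weak limit of Riesz measures (which follows from the subharmonic convergence in Lemma \ref{l1}), one gets that $\triangle\nu$ is dominated by $\frac{C}{\pi}|z|^{2\sigma}\,dm$.

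For $\sigma\ge 0$ this already gives $\triangle\nu\in L^\infty(2\mathbb{D})$, and Lemma \ref{l2} applies verbatim. The delicate point, which I expect to be the main obstacle, is the origin when $-1<\sigma<0$: there $|z|^{2\sigma}$ blows up because the rescaling concentrates Riesz mass near $0$, so no global $L^\infty$ bound is available. To circumvent this I would apply Lemma \ref{l2} not on all of $2\mathbb{D}$ but on each annulus $\{\delta<|z|<2\}$, where $|z|^{2\sigma}$ is bounded and hence $\triangle\nu\in L^\infty$; this yields that $\phi:=\sum_{1\le i\le q}\nu_i-(q-m+1)\nu$ is subharmonic on $2\mathbb{D}\setminus\{0\}$.

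Finally I would remove the singularity at the origin. From $|Q_i({\bf f})|\le C\|{\bf f}\|^d$ one obtains $u_i\le u+O(1)$, hence $\nu_i\le\nu$ for every $i$; since also $\nu\le 0$ (because $u\le h_k$ on $2r_k\mathbb{D}$, so each rescaled function is nonpositive), this gives $\phi\le (m-1)\nu\le 0$, so $\phi$ is bounded above near $0$. By the removable-singularity theorem for subharmonic functions, a function that is subharmonic on a punctured disc and bounded above extends uniquely to a subharmonic function on the full disc. Therefore $\phi$ is subharmonic on all of $2\mathbb{D}$, which is the assertion of the lemma.
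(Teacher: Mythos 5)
Your proposal follows the same overall route as the paper: both reduce the statement to Lemma \ref{l2} via Lemma \ref{l1}, so that the only point left to check is the $L^\infty$ hypothesis on $\triangle\nu$. Where you diverge is at the origin, and your version is the more careful one. The paper estimates $\triangle\nu(B(z,\delta))\leq O(1)\delta^{2\sigma+2}$ (by comparing with $\triangle u(B(r_kz,r_k\delta))$, essentially as you do) and immediately concludes that $\triangle\nu$ is $L^\infty$ on all of $2\mathbb{D}$. That inference is valid when $\sigma\geq 0$, since then $\delta^{2\sigma+2}\leq\delta^{2}$; but for $-1<\sigma<0$ the exponent $2\sigma+2$ is smaller than $2$, and the bound is exactly what the limiting density $\sim|z|^{2\sigma}$ produces, which, as you compute, is genuinely unbounded at $0$. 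So the paper's proof, read literally, has a gap in the range $-1<\sigma<0$, and your annulus-plus-removable-singularity step is precisely what closes it: $\phi=\sum_{1\leq i\leq q}\nu_i-(q-m+1)\nu$ is subharmonic off the origin by Lemma \ref{l2} applied on the annuli $\{\delta<|z|<2\}$, is $\leq(m-1)\nu\leq 0$ because $\nu_i\leq\nu\leq 0$, and therefore extends subharmonically across $0$. The one cosmetic caveat is that the removable-singularity theorem assigns the value $\limsup_{z\to 0}\phi(z)$ at the origin, which need not coincide with $\sum\nu_i(0)-(q-m+1)\nu(0)$; this is harmless, since the lemma is only ever used through the distributional inequality $(q-m+1)\int\chi\triangle\nu\leq\sum_i\int\chi\triangle\nu_i$, and in the proof of Theorem \ref{t2} the cut-off $\chi$ is in any case supported away from the origin. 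In short, your proof is correct, and it repairs rather than merely reproduces the paper's argument.
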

\begin{proof}
	In view of Lemma \ref{l1} and Lemma \ref{l2}, it suffices to prove that	the Riesz measure $\triangle\nu$ is $L^{\infty}$ on $2\mathbb{D}.$
	
	Indeed, since $\dfrac{1}{r_k^{2\sigma+2}}(u-h_k)\circ\lambda_k\rightarrow \nu$, we have
	 $\dfrac{1}{r_k^{2\sigma+2}}\int \phi\triangle(u\circ\lambda_k )\rightarrow \int\phi\triangle\nu$ for any test function $\phi\in C_0^\infty(2\mathbb{D}).$
	 
We denote by $B(z,r)$ the open disc of radius $r$ centered at $z$. Hence, for any $B(z,\delta)\subset 2\mathbb{D},$ we have
	$$\triangle\nu(B(z,\delta))\leq \liminf_{r\rightarrow\infty}\dfrac{\triangle u(B(rz,r\delta))}{r^{2\sigma+2}}\leq \liminf_{r\rightarrow\infty}\dfrac{\delta^{2\sigma+2}r^{2\sigma+2}O(1)}{r^{2\sigma+2}}=O(1)\delta^{2\sigma+2},  $$
	where $O(1)$ is a constant depending only on $f.$ Hence $\triangle\nu$ is $L^\infty$ on $2\mathbb{D}.$
\end{proof}

\section{Proof of main theorems}
In this section, we provide the proofs of main theorems. More precisely, Theorem \ref{t1} follows from \cite{BE} and inequality \ref{e2}. Theorem \ref{t2} is obtained from Lemma \ref{l3} and Theorem \ref{t3} is a consequence of Theorem \ref{t2}. The details are given as follows.\\
{\it  Proof of Theorem \ref{t2}. }

We first prove that
\begin{align}\label{e21}
	\limsup_{r\rightarrow\infty} \frac{1}{r^{2\sigma+2}}\left[(q-m+1)\triangle u(\mathbb{D}_r)-\sum_{1\leq i\leq q} \triangle u_i (\mathbb{D}_r)\right]\leq 0,
\end{align}
where $\mathbb{D}_r=\Omega \bigcap\{|z|<r\}.$
Indeed, assume that $r_k\rightarrow +\infty$ such that 
\begin{align*}
	\frac{1}{r_k^{2\sigma+2}}\left[(q-m+1)\triangle u(\mathbb{D}_{r_k})-\sum_{1\leq i\leq q} \triangle u_i (\mathbb{D}_{r_k})\right]	
\end{align*}
converges.

Applying Lemma \ref{l3}, we have $\sum_{1\leq i\leq q}\nu_i-(q-m+1)\nu$ is subharmonic in $2\mathbb{D}\bigcap \Omega$. Fix $\delta>0$ with $0<\delta<\min\{1/2,\epsilon\}$. We choose a nonnegative smooth function $\chi$ in $\mathbb{D}\bigcap \Omega$ such that $0\leq \chi\leq 1$ and $\chi=1$ in $\mathbb{D}^\delta=(1-\delta)\mathbb{D}\bigcap \Omega(\theta, \epsilon-\delta)\bigcap\{|z|>\delta\}$ and $\supp\chi$ is a compact subset in $\mathbb{D}\bigcap\Omega.$ Since $\sum_{1\leq i\leq q}\nu_i-(q-m+1)\nu$ is subharmonic, we have
$$(q-m+1)\int\chi\triangle\nu\leq \sum_{1\leq i\leq q}\int \chi\triangle \nu_i,$$
where the integration is taken over $\mathbb{D}\bigcap \Omega.$ It follows that for $k$ sufficient large, we get
\begin{align*}
	(q-m+1)\dfrac{1}{ r_k^{2\sigma+2}}\int\chi\triangle(u\circ\lambda_k)\leq \sum_{1\leq i\leq q}\dfrac{1}{ r_k^{2\sigma+2}}\int\chi\triangle(u_i\circ\lambda_k)+\delta.
\end{align*}
Therefore, 
\begin{align}\label{e31}
	(q-m+1)\triangle u(r_k\mathbb{D}^\delta)\leq \dfrac{1}{d}\sum_{1\leq i\leq q} \triangle u_i(r_k\mathbb{D}\bigcap\Omega)+\delta r_k^{2\sigma+2},
\end{align}
where $r_k\mathbb{D}^\delta=(1-\delta)r_k\mathbb{D}\bigcap \Omega(\theta, \epsilon-\delta)\bigcap\{|z|>\delta r_k\}$. Furthermore, since $\|f'(z)\|=O(|z|^\sigma)$, we have 
\begin{align}\label{e32}
	\triangle u(r_k\mathbb{D}\bigcap\Omega)\leq \triangle u(r_k\mathbb{D}^\delta)+(1+3\epsilon)O(1)\delta( r_k)^{2\sigma+2}	.
\end{align}
From \eqref{e31} and \eqref{e32} and letting $\delta\rightarrow 0$, we have
\begin{align*}
	\lim_{k\rightarrow}\dfrac{1}{r_k^{2\sigma+2}}\left[(q-m+1)\triangle u(\mathbb{D}_{r_k})-\sum_{1\leq i\leq q}\triangle u_i(\mathbb{D}_{r_k})\right]\leq 0.
\end{align*} 
Hence, \eqref{e21} holds. From \eqref{e21}, we have
\begin{align*}
	(q-m+1)\triangle u(r\mathbb{D}\bigcap\Omega)&\leq \sum_{1\leq i\leq q} \triangle u_i(r\mathbb{D}\bigcap\Omega)+o(r^{2\sigma+2})\\
	&=n_f(r, r\mathbb{D}\bigcap\Omega, D_i)+o(r^{2\sigma+2}),
\end{align*}
where $n_f(r, r\mathbb{D}\bigcap\Omega, D_i)$ is the number of zeros of $Q_i\circ{\bf f}$ in the domain $r\mathbb{D}\bigcap \Omega$, counting multiplicity.
Dividing both sides of above inequality to $r$ and integrating it, we get the conclusion.\\
%	Now, using the same arguments as in the proof of Theorem \cite[Theorem 1.8]{TM}, we prove Theorem \ref{t3}.
	{\it Proof of Theorem \ref{t3}.}
	Suppose that $f$ has no $\bar{T}-$direction. Then, for each ray $J(\theta)$, we have an angle containing it such that 
	$$\limsup_{r\rightarrow\infty} \dfrac{N_f(r, \Omega(\theta,\epsilon),D_i)}{T_f(r)}=0$$
for hypersurfaces $D_1,\ldots, D_q$ in $\P^N(\C)$ satisfying $\bigcap_{1\leq i\leq q}\supp D_i\bigcap V$ consisting of finite points. Hence, we can choose finite family of angles $\Omega(\theta_i, \epsilon_i) (1\leq i\leq p)$ such that 
$$\C\subset\cup_{1\leq i\leq p}\Omega(\theta_i, \epsilon_i) )$$
 and 
\begin{align}\label{e205}
	\limsup_{r\rightarrow\infty}\dfrac{N_f(r, \Omega(\theta_i,\epsilon_i),D_i)}{T_f(r)}=0.
\end{align}
	for hypersurfaces $D_1,\ldots, D_q$ in $\P^N(\C)$ satisfying $\bigcap_{1\leq i\leq q}\supp D_i\bigcap V$ consisting of finite points.
Therefore,
\begin{align}\label{e202}
	T_f(r)\leq \sum_{1\leq i\leq p}T_f(r, \Omega(\theta_i, \epsilon_i) ).
\end{align}
%The condition $\bigcap_{1\leq i\leq q}\supp D_i \bigcap V$ consisting of finite points implies that $D_1,\ldots, D_q$ located in $q-$ subgeneral position in $V$.
 Applying Theorem \ref{t2} to $D_i, 1\leq i\leq q$, we have
\begin{align}\label{e203}
	\sum_{1\leq k\leq p}T_f(r, \Omega(\theta_k, \epsilon_k) )\leq \sum_{1\leq k\leq p}\sum_{1\leq i\leq q}\dfrac{1}{d_i}N_f(r, \Omega(\theta_k, \epsilon_k),D_i)+o(r^{2\sigma+2}).
\end{align}
From \eqref{e202} and \eqref{e203}, we have
\begin{align}\label{e204}
	T_f(r )\leq \sum_{1\leq k\leq p}\sum_{1\leq i\leq q}\dfrac{1}{d_i}N_f(r, \Omega(\theta_k, \epsilon_k),D_i)+o(r^{2\sigma+2}).
\end{align}
From \eqref{e205} and \eqref{e204} and the fact that $\limsup\limits_{r\rightarrow\infty}\dfrac{T_f(r)}{r^{2\sigma+2}}>0$, we get a contradiction.\\
{\it Proof of Theorem \ref{t1}.}
Without loss of generality, we can assume that $D_1,\ldots, D_q$ have the same degree $d$. Let $Q_1,\ldots, Q_q$ be the homogeneous polynomials of degree $d$ defining $D_1,\ldots, D_q$.
Since $\bigcap_{1\leq i\leq q}\{Q_i=0\}\bigcap V$ consists of finite points, there exists a hypersurface $D_0$ of $\mathbb{P}^N(\mathbb{C})$ defining by a homogeneous polynomial $Q_0\in \mathbb{C}[X_0,\ldots, X_N]$ such that $\bigcap_{0\leq i\leq q}\{Q_i=0\}\bigcap V=\emptyset.$ Consider the map
\begin{align*}
	Q=(Q_0,\ldots, Q_q): V\rightarrow \P^q(\C), z\mapsto (Q_0(z),\ldots, Q_q(z)).
\end{align*}
This map is a holomorphic map between two compact complex varieties. Together with the fact that $\|f'(z)\|=O(|z|^\sigma)$, we have $Q\circ f: \mathbb{C}\rightarrow \P^q(\C)$ satisfies $\|Q'\circ f(z)\|=O(|z|^\sigma)$. Since $\bigcap_{1\leq i\leq q}\{Q_i=0\}\bigcap f(\C)=\emptyset,$ $Q\circ f$ omits $q$ hyperplanes in general position. Hence, the main theorem in \cite{BE} is applicable to $Q\circ f$. Then, we have
$T_{Q\circ f}(r)=O(r^{\sigma+1}).$ Using the same arguments as in Lemma \ref{l1} to obtain inequality \ref{e2}, we have $T_{Q\circ f}(r)=dT_f(r)$. This completes the proof of Theorem \ref{t1}.

\section{Acknowledgements}
This work was supported by a NAFOSTED, grant of Vietnam (Grant No. 101.02-2021.16).
	

\begin{thebibliography}{11}
		%\bibitem{BD} F. Berteloot et J. Duval,
	%	Sur l'hyperbolicit\'e de certains
	%	comp\-l\'e\-men\-taires,
	%	Enseign. Math. (2)  47  (2001),  no. 3-4, 253--267.
	\bibitem{DD} B. F. P. Da Costa, J. Duval Sur les courbes de Brody dans $\mathbb{P}^n(\mathbb{C})$. Math. Ann. \textbf{355}(4), 1593-1600 (2013).
		\bibitem{CH} J. Clunie and W. Hayman, The spherical derivative of 
		integral and meromorphic functions, Comment. math. helv., 40 (1966)
		373-381.
			\bibitem{BE}  Barrett. M; Eremenko. A, Generalization of a theorem of Clunie and Hayman. Proc. Amer. Math. Soc. \textbf{140}(2012), 1397–1402.
		\bibitem{Ere} A. Eremenko, Julia directions for holomorphic curves. Preprint in A. Eremenko's homepage.
		\bibitem{Gromov} M. Gromov, Topological invariants
		of dynamical systems and spaces of harmonic maps,
		Math. Phys. Anal. and Geom. 2 (1999) no. 4, 323-415.
		\bibitem{Gromov} M. Gromov, Topological invariants of dynamical systems and spaces of holomorphic
	maps: I, Math. Phys. Anal. Geom., \textbf{2}(1999), 323–415.
		\bibitem{Hor} L. H\"ormander, Analysis of linear partial
		differential operators I, Distribution theory and
		Fourier analysis, Springer, Berlin, 1983. 
			\bibitem{Ju} G. Julia, Sur quelques proprietes nouvelles des functions entieres ou meromorphes (premier mémoire).
		Ann. Sci. Ec. Norm. Sup. \textbf{36}, 93–125 (1916).
		\bibitem{Lang} S. Lang, Introduction to complex hyperbolic spaces, Springer-Verlag, New York, 1987. 
	%	\bibitem{Levin} B. Levin, Distribution of zeros of entire
	%	functions, AMS, Providence RI, 1980.
	%	\bibitem{Pom} Ch. Pommerenke, Estimates for normal meromorphic
	%	functions, Ann. Acad. Sci. Fenn., Ser AI, 476 (1970).
	
		\bibitem{T1} S. Matsuo, M. Tsukamoto, Brody curves and mean dimension, J. Am. Math. Soc. \textbf{28}, 159-182 (2015).
			\bibitem{Q} S.D. Quang, Degeneracy second main theorems for meromorphic mappings into projective varieties with hypersurfaces. Trans. Amer. Math. Soc. \textbf{371}(2019), no. 4, 2431–2453.
			\bibitem{TM} D. D. Thai, P. N. Mai, Singular directions of Brody curves, J. Geom. Anal. \textbf{31} (2021), 1721–1731.
			\bibitem{T0} M. Tsukamoto, Moduli space of Brody curves, energy and mean dimension. Nagoya Math. J. \textbf{192} (2008), 27–58.
		\bibitem{T2} M. Tsukamoto, Mean dimension of the dynamical system of Brody curves. Invent. Math. \textbf{211}, 935-968 (2018). 

	%	\bibitem{T2} M. Tsukamoto, On holomorphic curves in algebraic torus.  J. Math. Kyoto Univ.  47  (2007),  no. 4, 881--892.
	
	%	\bibitem{T3} Tsukamoto, Deformation of Brody curves and mean dimension,
	%	arXiv:0712.0266.

%\bibitem{Va}
		\bibitem{W} J. Winkelmann, 
		On Brody and entire curves. 
		Bull. Soc. Math. France 135 (2007), no. 1, 25--46. 
		
		\bibitem{Zh} J. H. Zheng, Value distribution of holomorphic curves on an angular domain. Mich. Math. J. \textbf{64}, 849-879 (2015).
	\end{thebibliography}
\end{document}